\newtheorem{theorem}{Theorem}[section]
\newtheorem{proposition}[theorem]{Proposition}
\newtheorem{corollary}[theorem]{Corollary}
\theoremstyle{definition}
\newtheorem{definition}[theorem]{Definition}
\newtheorem{example}[theorem]{Example}
\numberwithin{equation}{section}
\begin{document}

\setcounter{page}{1}

\title{$\ast$-K-g-Frames and their duals for Hilbert $\mathcal{A}$-modules}

\author{M'hamed Ghiati$^{1*}$, Samir Kabbaj$^2$, Hatim Labrigui$^2$, Abdeslam Touri$^2$  and Mohamed Rossafi$^3$}

\address{$^{1}$Laboratory Analysis, Geometry and Applications Department of Mathematics, Faculty Of Sciences, University of Ibn Tofail, Kenitra, Morocco}
\email{\textcolor[rgb]{0.00,0.00,0.84}{lalamimouna.mhamed@gmail.com}}

\address{$^{2}$Laboratory of Partial Differential Equations, Spectral Algebra and Geometry Department of Mathematics, Faculty of Sciences, University Ibn Tofail, Kenitra, Morocco}

\email{\textcolor[rgb]{0.00,0.00,0.84}{samkabbaj@yahoo.fr}}

\email{\textcolor[rgb]{0.00,0.00,0.84}{hlabrigui75@gmail.com}}

\email{\textcolor[rgb]{0.00,0.00,0.84}{touri.abdo68@gmail.com}}

\address{$^{3}$LaSMA Laboratory Department of Mathematics Faculty of Sciences, Dhar El Mahraz University Sidi Mohamed Ben Abdellah, B. P. 1796 Fes Atlas, Morocco}
\email{\textcolor[rgb]{0.00,0.00,0.84}{rossafimohamed@gmail.com; mohamed.rossafi@usmba.ac.ma}}

\subjclass[2010]{Primary 42C15; Secondary 46L05.}

\keywords{frames, $\ast$-g-frames, $\ast$-$K$-g-frames, $C^{\ast}$-algebra, Hilbert $\mathcal{A}$-modules.}

\date{Received: 
\newline \indent $^{*}$Corresponding author}

\begin{abstract}
Frame theory has a great revolution in recent years. This new Theory have been extended from Hilbert spaces to Hilbert  $C^{\ast}$-modules. In this paper, we introduce the notion of dual $\ast$-$K$-g-frames in Hilbert $\mathcal{A}$-modules. Lastly we study $\ast$-$K$-g-frames in tensor product of Hilbert $C^{\ast}$-Modules and we establish some new results.
\end{abstract} \maketitle

\section{Introduction and preliminaries}
In 1952, frames in Hilbert spaces were introduced by Duffin and Schaeffer \cite{Duf} in the study of nonharmonic Fourier series. Frames possess many nice properties which make them very useful in wavelet analysis, irregular sampling theory, signal processing and many other fields.\\ 
In 2000, Frank and Larson \cite{F4} have extended the theory for the elements of $C^{\ast}$-algebras and Hilbert $C^{\ast}$-modules. Eventually, frames with $C^{\ast}$-valued bounds in Hilbert $C^{\ast}$-modules have been considered in \cite{F2}.\\
The theory of frames has been generalized rapidly and there are various generalizations of frames in Hilbert spaces and Hilbert $C^{\ast}$-modules.

In this paper, we introduce the notion of dual $\ast$-$K$-g-frames in Hilbert $C^{\ast}$-modules. Lastly we study $\ast$-$K$-g-frames in tensor product of Hilbert $\ast$-Hilbert Modules and we establish some results.

Let $I$ and $J$ be countable index sets. In this section we briefly recall the definitions and basic properties of $C^{\ast}$-algebra and Hilbert $C^{\ast}$-modules. For information about frames in Hilbert spaces we refer to \cite{Ch}. Our references for $C^{\ast}$-algebras are \cite{Dav,Con}. For a $C^{\ast}$-algebra $\mathcal{A}$, an element $a\in\mathcal{A}$ is positive ($a\geq 0$) if $a=a^{\ast}$ and $sp(a)\subset\mathbf{R^{+}}$. $\mathcal{A}^{+}$ denotes the set of positive elements of $\mathcal{A}$.
\begin{definition}\cite{BA}.
	
Let $ \mathcal{A} $ be a unital $C^{\ast}$-algebra and $\mathcal{H}$ be a left $ \mathcal{A} $-module, such that the linear structures of $\mathcal{A}$ and $ \mathcal{H} $ are compatible. $\mathcal{H}$ is a pre-Hilbert $\mathcal{A}$-module if $\mathcal{H}$ is equipped with an $\mathcal{A}$-valued inner product $\langle.,.\rangle_{\mathcal{A}} :\mathcal{H}\times\mathcal{H}\rightarrow\mathcal{A}$, such that is sesquilinear, positive definite and respects the module action. In the other words,
	\begin{itemize}
		\item [(i)] $ \langle x,x\rangle_{\mathcal{A}}\geq0 $ for all $ x\in\mathcal{H} $ and $ \langle x,x\rangle_{\mathcal{A}}=0$ if and only if $x=0$.
		\item [(ii)] $\langle ax+y,z\rangle_{\mathcal{A}}=a\langle x,z\rangle_{\mathcal{A}}+\langle y,z\rangle_{\mathcal{A}}$ for all $a\in\mathcal{A}$ and $x,y,z\in\mathcal{H}$.
		\item[(iii)] $ \langle x,y\rangle_{\mathcal{A}}=\langle y,x\rangle_{\mathcal{A}}^{\ast} $ for all $x,y\in\mathcal{H}$.
	\end{itemize}	 
	For $x\in\mathcal{H}, $ we define $\|x\|=\|\langle x,x\rangle_{\mathcal{A}}\|^{\frac{1}{2}}$. If $\mathcal{H}$ is complete with $\|.\|$, it is called a Hilbert $\mathcal{A}$-module or a Hilbert $C^{\ast}$-module over $\mathcal{A}$. For every $a$ in $C^{\ast}$-algebra $\mathcal{A}$, we have $|a|=(a^{\ast}a)^{\frac{1}{2}}$ and the $\mathcal{A}$-valued norm on $\mathcal{H}$ is defined by $\|x\|=\langle x, x\rangle_{\mathcal{A}}^{\frac{1}{2}}$ for $x\in\mathcal{H}$.
	
	Let $\mathcal{H}$ and $\mathcal{K}$ be two Hilbert $\mathcal{A}$-modules, a map $T:\mathcal{H}\rightarrow\mathcal{K}$ is said to be adjointable if there exists a map $T^{\ast}:\mathcal{K}\rightarrow\mathcal{H}$ such that $\langle Tx,y\rangle_{\mathcal{A}}=\langle x,T^{\ast}y\rangle_{\mathcal{A}}$ for all $x\in\mathcal{H}$ and $y\in\mathcal{K}$.
	
	From now, let $\mathcal{H}_{i}$ be closed submodules of $\mathcal{K}$ for each $i\in I$. Also, we reserve the notation $End_{\mathcal{A}}^{\ast}(\mathcal{H},\mathcal{H}_{i})$ for the set of all adjointable operators from $\mathcal{H}$ to $\mathcal{H}_{i}$ and $End_{\mathcal{A}}^{\ast}(\mathcal{H},\mathcal{H})$ is abbreviated to $End_{\mathcal{A}}^{\ast}(\mathcal{H})$.
\end{definition}
\begin{definition}\cite{F4}
Let $ \mathcal{A} $ be a unital $C^{\ast}$-algebra and $I$ be a finite or countable index of $\mathbb{N}$. A sequence $\{x_{i}\}_{i\in I}$ of elements in a Hilbert $\mathcal{A}$-module $\mathcal{H}$ is said to be a frame if there are real constants $A,B>0$ such that 
	\begin{equation}\label{1}
		A\langle x,x\rangle_{\mathcal{A}}\leq\sum_{i\in I}\langle x,x_{i}\rangle_{\mathcal{A}}\langle x_{i},x\rangle_{\mathcal{A}}\leq B\langle x,x\rangle_{\mathcal{A}},\qquad x\in \mathcal{H}.
	\end{equation}
The numbers $A$ and $B$ are called frames bounds. The frame $\{x_{i}\}_{i\in I}$ is said to be a tight frame if $A=B$, and said to be normalized if $A=B=1$.\\
If the sum in the middle of \eqref{1} converges in norm, $\{x_{i}\}_{i\in I}$ is called standard (normalized tight).
\end{definition}

\begin{definition}\cite{A}
A $\ast$-g-frame for $\mathcal{H}$ is a collection of ordered pairs $(\Lambda_{i},\mathcal{H}_{i})_{i\in I }$ such that  
\begin{equation}\label{4}
		A\langle x,x\rangle_{\mathcal{A}} A^{\ast}\leq\sum_{i\in I}\langle \Lambda_{i}x,\Lambda_{i}x\rangle_{\mathcal{A}}\leq B\langle x,x\rangle_{\mathcal{A}} B^{\ast}, 
\end{equation}
for $x\in \mathcal{H}$ and $A$,$B$ are strictly nonzero elements of $\mathcal{A}$.\\
	The elements $A$ and $B$ are called lower and upper $\ast$-g-frame bounds, respectively. If $A=B$, the $\ast$-g-frame is called tight and it is normalized when $A=B$.
\end{definition}

\begin{definition} \cite{RK2}
Let $K\in End_{\mathcal{A}}^{\ast}(\mathcal{H})$, a sequence $\{\Lambda_{i}\in End_{A}^{\ast}(\mathcal{H},\mathcal{H}_{i}):i\in I \}$ is called a $\ast$-K-g-frame in Hilbert $\mathcal{A}$-module $\mathcal{H}$ with respect to $\{\mathcal{H}_{i}:i\in I \}$ if there exist strictly nonzero elements $A$, $B$ in $\mathcal{A}$ such that 
	\begin{equation}\label{123}
	A\langle K^{\ast}x,K^{\ast}x\rangle_{\mathcal{A}} A^{\ast}\leq\sum_{i\in I}\langle \Lambda_{i}x,\Lambda_{i}x\rangle_{\mathcal{A}}\leq B\langle x,x\rangle_{\mathcal{A}} B^{\ast}, \qquad x\in\mathcal{H}.
	\end{equation}
\end{definition}

In this case, let $T$ be an operator defined by:
\begin{align*}
T:\mathcal{H}&\longrightarrow \oplus_{i\in I}\mathcal{H}_{i}\\
x&\longrightarrow \{\Lambda_{i}x\}_{i\in I}.
\end{align*}
$T$ is called the analysis operator. By \cite{RK2}, it's a linear and bounded operator.\\
Then it's adjoint operator is $T^{\ast}$, is given by,
\begin{align*}
T^{\ast}:\oplus_{i\in I}\mathcal{H}_{i}&\longrightarrow \mathcal{H} \\
\{x_{i}\}_{i\in I}&\longrightarrow  \sum_{i\in I}\Lambda^{\ast}_{i}x_{i}.
\end{align*}
The operator $T^{\ast}$ is called the synthesis operator.\\
By composing $T$ and $T^{\ast}$, we obtain the frame operator $S$ wich's given by,
\begin{align*}
S:\mathcal{H}&\longrightarrow \mathcal{H}\\
x&\longrightarrow Sx=T^{\ast}Tx=\sum_{i\in I}\Lambda^{\ast}_{i}\Lambda_{i}x.
\end{align*}

Let $\{x_{i}\}_{i\in I}$ and $\{y_{i}\}_{i\in I}$ in $\oplus_{i\in I}\mathcal{H}_{i}$, there inner product is defined as follow,
\begin{equation*}
\langle \{x_{i}\}_{i\in I},\{y_{i}\}_{i\in I}\rangle = \sum_{i\in I}\langle x_{i},y_{i}\rangle_{\mathcal{A}}
\end{equation*}
For the Following theorem, $R(L)$ denote the range of the operator $L$ and $\overline{R(L)}$ denote it's adherence.
\begin{theorem}\label{t1}\cite{11}
Let $E, F$ and $G$ be Hilbert $\mathcal{A}$-modules. Let $K \in End^{\ast}_{\mathcal{A}}(G,F)$ and $L \in End^{\ast}_{\mathcal{A}}(E,F)$ with $\overline{R(L^{\ast})}$ is orthogonally complemented. The following statements are equivalent,
\begin{itemize}
\item [(i)] $KK^{\ast}\leq \lambda LL^{\ast}$ for some $\lambda > 0$.
\item [(ii)] There exists $\mu >0$ such that $\|K^{\ast}z\|\leq \mu \|L^{\ast}z\|$ for all $z\in F$.
\item [(iii)] There exists $D\in End^{\ast}_{\mathcal{A}}(G,E)$ such that $K=LD$.
\item [(iv)] $R(K) \subseteq R(L)$.
\end{itemize}
\end{theorem}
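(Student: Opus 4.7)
The plan is to prove the equivalences via the chain (iii) $\Rightarrow$ (i) $\Rightarrow$ (ii) $\Rightarrow$ (iii), together with (iii) $\Leftrightarrow$ (iv). This is the Douglas range-inclusion theorem transported to the Hilbert $C^{\ast}$-module setting, and the orthogonally complemented hypothesis on $\overline{Rang(L^{\ast})}$ is precisely the device that replaces the automatic availability of orthogonal projections in Hilbert spaces.

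For the easy implications, (iii) $\Rightarrow$ (i) is a direct computation: if $K = LD$ then $KK^{\ast} = LDD^{\ast}L^{\ast} \leq \|D\|^{2}\, LL^{\ast}$, since $\|D\|^{2}\mathrm{Id}_{E} - DD^{\ast} \geq 0$ in $End^{\ast}_{\mathcal{A}}(E)$. For (i) $\Rightarrow$ (ii), I would take $\mathcal{A}$-valued inner products, using $\langle K^{\ast}z, K^{\ast}z\rangle_{\mathcal{A}} = \langle KK^{\ast}z, z\rangle_{\mathcal{A}} \leq \lambda \langle LL^{\ast}z, z\rangle_{\mathcal{A}} = \lambda \langle L^{\ast}z, L^{\ast}z\rangle_{\mathcal{A}}$, and then passing to $C^{\ast}$-norms to deduce $\|K^{\ast}z\| \leq \sqrt{\lambda}\,\|L^{\ast}z\|$, so $\mu = \sqrt{\lambda}$ works. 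The implication (iii) $\Rightarrow$ (iv) is immediate from $R(K) = R(LD) \subseteq R(L)$, and (iv) $\Rightarrow$ (iii) follows by selecting, for each $g \in G$, a preimage of $Kg$ in $\overline{Rang(L^{\ast})}$ (which exists thanks to the complementation hypothesis), and then verifying the resulting assignment is bounded, $\mathcal{A}$-linear, and adjointable; alternatively one may derive (ii) from (iv) via a closed-graph argument.

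The heart of the proof is (ii) $\Rightarrow$ (iii). On the submodule $Rang(L^{\ast}) \subseteq E$, define $\Psi(L^{\ast}z) := K^{\ast}z$. Condition (ii) guarantees this is well-defined (since $L^{\ast}z = L^{\ast}z'$ forces $K^{\ast}z = K^{\ast}z'$) and bounded with $\|\Psi\| \leq \mu$. Extend $\Psi$ by continuity to $\overline{Rang(L^{\ast})}$, and then by zero on the orthogonal summand, yielding a bounded $\mathcal{A}$-linear map $\Psi : E \to G$. Setting $D := \Psi^{\ast}$, one checks that for every $z\in F$, $D^{\ast}L^{\ast}z = K^{\ast}z$, which gives the desired factorization $K = LD$.

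The principal obstacle is that a bounded $\mathcal{A}$-linear map between Hilbert $C^{\ast}$-modules need not be adjointable, so the Hilbert-space construction cannot be mimicked verbatim. The orthogonally complemented hypothesis on $\overline{Rang(L^{\ast})}$ is invoked in two places at once: it permits the zero-extension across the orthogonal summand, and it supplies an adjointable orthogonal projection onto $\overline{Rang(L^{\ast})}$, which in turn forces $\Psi$ to be adjointable and produces the operator $D \in End^{\ast}_{\mathcal{A}}(G,E)$. Removing this hypothesis would break exactly this step, which explains why it is indispensable to the statement.
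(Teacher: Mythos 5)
The paper itself offers no proof of this statement: Theorem \ref{t1} is imported wholesale from \cite{11} and used as a black box, so there is no in-paper argument to compare yours against. Judging your sketch on its own terms: the easy implications are correct and standard --- (iii) $\Rightarrow$ (i) via $KK^{\ast}=LDD^{\ast}L^{\ast}\leq\|D\|^{2}LL^{\ast}$, (i) $\Rightarrow$ (ii) by passing from $\langle K^{\ast}z,K^{\ast}z\rangle_{\mathcal{A}}\leq\lambda\langle L^{\ast}z,L^{\ast}z\rangle_{\mathcal{A}}$ to norms, and (iii) $\Rightarrow$ (iv) trivially.

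The genuine gap sits exactly where you place ``the heart of the proof.'' Your claim that the adjointable projection $P$ onto $\overline{Rang(L^{\ast})}$ ``forces $\Psi$ to be adjointable'' is not a valid inference. Writing $\Psi=\Psi_{0}P$, where $\Psi_{0}$ is the continuous extension of $L^{\ast}z\mapsto K^{\ast}z$ to $\overline{Rang(L^{\ast})}$, adjointability of $\Psi$ is equivalent to adjointability of the bounded module map $\Psi_{0}$, and bounded $\mathcal{A}$-linear maps between Hilbert $C^{\ast}$-modules need not be adjointable even when defined on an orthogonal summand --- this is precisely the obstruction you yourself identify, and the projection does not remove it. Worse, if you try to exhibit the adjoint directly you find that $D=\Psi^{\ast}$ must send each $g\in G$ to an element $Dg\in\overline{Rang(L^{\ast})}$ with $LDg=Kg$; that is, adjointability of $\Psi$ is essentially equivalent to the factorization being proved, so the argument is circular at its decisive step. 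A correct proof must construct $D$ by other means, e.g.\ as a norm limit of manifestly adjointable operators built from $L$, $K$ and resolvents of $LL^{\ast}$, with the majorization (i) used to establish the Cauchy property. The same defect affects your treatment of (iv) $\Rightarrow$ (iii): choosing preimages and invoking a closed-graph argument yields at best a bounded linear map, and again the entire difficulty --- adjointability --- is deferred to a single unproved word. Note also that the delicacy of condition (iv) in the module setting is visible in the paper itself, which separately quotes Theorem \ref{t2} under an additional closed-range hypothesis precisely to handle range inclusion.
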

\begin{theorem}\label{t2}\cite{29}
Given a Hilbert $C^{\ast}$-module $\mathcal{H}$ over a $C^{\ast}$-algebra $\mathcal{A}$. If $T,S$ are in $End^{\ast}_{\mathcal{A}}(\mathcal{H})$, and $R(S)$
is closed, then the following statements are equivalent:
\begin{itemize}
	\item [(i)] $R(T) \subset R(S)$.
	\item [(ii)] $TT^{\ast} \leq  \lambda^{2}SS^{\ast}$, for some $\lambda \geq0$.
	\item [(iii)] There exists $Q \in End^{\ast}_{\mathcal{A}}(\mathcal{H})$ such that $T = SQ$.
\end{itemize}
\end{theorem}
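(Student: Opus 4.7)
The plan is to prove the three stated conditions are equivalent via the cycle $(3) \Rightarrow (2) \Rightarrow (1) \Rightarrow (3)$, which is the Hilbert $C^{\ast}$-module version of the classical Douglas factorization theorem.

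The implication $(3) \Rightarrow (2)$ is a direct computation: if $T = SQ$, then
\begin{equation*}
TT^{\ast} = S Q Q^{\ast} S^{\ast} \leq \|Q\|^{2}\, S S^{\ast},
\end{equation*}
using $QQ^{\ast} \leq \|Q\|^{2}\operatorname{id}$ and the fact that conjugation by $S$ on the left and $S^{\ast}$ on the right preserves the order in $End^{\ast}_{\mathcal{A}}(\mathcal{H})$. So $\lambda = \|Q\|$ works.

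For $(2) \Rightarrow (1)$ I would take adjoint actions: from $TT^{\ast}\leq \lambda^{2}SS^{\ast}$ one obtains, for every $x\in\mathcal{H}$,
\begin{equation*}
\|T^{\ast}x\|^{2} = \|\langle TT^{\ast}x,x\rangle_{\mathcal{A}}\| \leq \lambda^{2}\|\langle SS^{\ast}x,x\rangle_{\mathcal{A}}\| = \lambda^{2}\|S^{\ast}x\|^{2}.
\end{equation*}
Because $R(S)$ is closed, $R(S^{\ast})$ is also closed and orthogonally complemented in the module setting, so Theorem \ref{t1} applies with $K=T$, $L=S$ and yields $R(T)\subseteq R(S)$.

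The real content of the theorem is $(1) \Rightarrow (3)$, and this is where I expect the main obstacle. In Hilbert spaces one defines $Q$ by composing $T$ with the pseudoinverse of $S$, and the same idea must be transported to $End^{\ast}_{\mathcal{A}}(\mathcal{H})$. The crucial fact that makes this work is that when an adjointable operator on a Hilbert $C^{\ast}$-module has closed range, both $R(S)$ and $\ker S$ are orthogonally complemented and $S$ admits an adjointable Moore--Penrose pseudoinverse $S^{\dagger}\in End^{\ast}_{\mathcal{A}}(\mathcal{H})$ with $S S^{\dagger}$ equal to the orthogonal projection onto $R(S)$. Granting this, define $Q := S^{\dagger}T$; then $Q\in End^{\ast}_{\mathcal{A}}(\mathcal{H})$ as a composition of adjointables, and for every $x\in\mathcal{H}$ the hypothesis $R(T)\subseteq R(S)$ gives $Tx\in R(S)$, whence $S Q x = S S^{\dagger}(Tx) = Tx$. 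Thus $T = SQ$, completing the cycle.

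The only nontrivial piece is the existence and adjointability of $S^{\dagger}$ under the closed-range assumption; this is the module-theoretic input (not true in general pre-Hilbert modules) that distinguishes this result from the Hilbert space case, and it is what forces the closedness hypothesis on $R(S)$ that is absent from Theorem \ref{t1}.
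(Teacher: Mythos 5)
The paper does not prove this statement at all: Theorem \ref{t2} is imported verbatim from the reference \cite{29} and used as a black box, so there is no in-paper argument to compare yours against. Judged on its own, your proof is the standard Douglas-factorization argument for Hilbert $C^{\ast}$-modules and is essentially sound. The implication $(3)\Rightarrow(2)$ via $QQ^{\ast}\leq\|Q\|^{2}\mathrm{id}$ and order-preservation of $A\mapsto SAS^{\ast}$ is correct; for $(2)\Rightarrow(1)$ your reduction to the norm inequality $\|T^{\ast}x\|\leq\lambda\|S^{\ast}x\|$ and the appeal to Theorem \ref{t1} is legitimate given that closedness of $R(S)$ forces $R(S^{\ast})$ to be closed and orthogonally complemented (you could even shortcut this by invoking (i)$\Rightarrow$(iv) of Theorem \ref{t1} directly); and $(1)\Rightarrow(3)$ via $Q=S^{\dagger}T$ is the right construction. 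The two external inputs you lean on --- the existence of an adjointable Moore--Penrose inverse for a closed-range adjointable operator, and the complementability of $R(S^{\ast})$ --- are genuine theorems (Xu--Sheng, Lance) rather than trivialities, and you correctly identify them as the module-theoretic content that the closed-range hypothesis buys; a fully self-contained proof would have to cite or reprove them, but flagging them as you do is appropriate. One cosmetic point: when $\lambda=0$ in (2) you should note $T=0$ so that the strictly positive $\mu$ required in Theorem \ref{t1}(ii) can still be chosen.
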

\section{Some properties of $\ast$-K-g-frames in Hilbert $\mathcal{A}$-modules}

\begin{proposition}	
Let $\{\Lambda_{i}\in End_{\mathcal{A}}^{\ast}(\mathcal{H},\mathcal{H}_{i}),i\in I \}$ be a $\ast$-K-g-frame with bounds $A,B$ and analysis operator $T$, then :
	\begin{equation}\label{P1}
	\|AK^{\ast}f\|^{2} \leq \|\sum_{i\in I}\langle \Lambda_{i}f,\Lambda_{i}f\rangle_{\mathcal{A}}\|\leq \|Bf\|^{2} , \qquad f \in \mathcal{H}.
	\end{equation}
Conversely, if \eqref{P1} holds, for some $A,B \in \mathcal{Z}(\mathcal{A})$, and $\overline{R(T)}$ is orthogonally complemented, then $\{\Lambda_{i}\}_{i\in I}$ is a $\ast$-K-g frames ($\mathcal{Z}(\mathcal{A})$ denote the center of the $C^{\ast}$-algebra $\mathcal{A}$).
	\end{proposition}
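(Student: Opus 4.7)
The two implications are genuinely separate: the forward direction converts $\mathcal{A}$-valued inequalities into norm inequalities by applying $\|\cdot\|$, while the converse converts norm inequalities back into operator inequalities via Theorem~\ref{t1}.

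\textbf{Forward direction.} The defining inequalities of a $\ast$-K-g-frame are relations in $\mathcal{A}^{+}$, on which the $C^{\ast}$-norm is order preserving. Using the identity $\langle af, af\rangle_{\mathcal{A}}=a\langle f, f\rangle_{\mathcal{A}}a^{\ast}$ (valid for every $a\in\mathcal{A}$ in a left Hilbert $\mathcal{A}$-module, by a direct computation from the sesquilinearity axioms), I obtain $\|A\langle K^{\ast}f, K^{\ast}f\rangle_{\mathcal{A}}A^{\ast}\| = \|\langle AK^{\ast}f, AK^{\ast}f\rangle_{\mathcal{A}}\| = \|AK^{\ast}f\|^{2}$, and similarly $\|B\langle f, f\rangle_{\mathcal{A}}B^{\ast}\|=\|Bf\|^{2}$. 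Applying $\|\cdot\|$ to the chain \eqref{123} yields \eqref{P1} at once.

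\textbf{Converse.} Rewrite the middle term as $\|Tf\|^{2}=\|\langle Tf, Tf\rangle_{\oplus\mathcal{H}_{i}}\|$, so that \eqref{P1} becomes $\|AK^{\ast}f\|\leq \|Tf\|\leq \|Bf\|$. Here is where centrality enters: for $A, B\in Z(\mathcal{A})$, the left multiplications $L_{A}, L_{B}\colon\mathcal{H}\to\mathcal{H}$ are $\mathcal{A}$-linear and adjointable with adjoints $L_{A^{\ast}}, L_{B^{\ast}}$, because $A$ (resp.\ $B$) commutes with every entry of $\langle\cdot,\cdot\rangle_{\mathcal{A}}$. Since $K\in End_{\mathcal{A}}^{\ast}(\mathcal{H})$ is $\mathcal{A}$-linear, $K^{\ast}(Af)=AK^{\ast}f$, so the operator $M:=L_{A^{\ast}}K\in End_{\mathcal{A}}^{\ast}(\mathcal{H})$ has adjoint $M^{\ast}=L_{A}K^{\ast}$. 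For the lower bound I apply Theorem~\ref{t1} with common codomain $\mathcal{H}$, taking $K\leftrightarrow M$ and $L\leftrightarrow T^{\ast}$: the required hypothesis ``$\overline{\mathrm{Rang}(L^{\ast})}$ orthogonally complemented'' reads $\overline{\mathrm{Rang}(T)}$ complemented, which is assumed, and the inequality $\|M^{\ast}f\|\leq\|(T^{\ast})^{\ast}f\|$ then yields $MM^{\ast}\leq T^{\ast}T$; pairing against $f$ via $\langle MM^{\ast}f, f\rangle_{\mathcal{A}}=A\langle K^{\ast}f, K^{\ast}f\rangle_{\mathcal{A}}A^{\ast}$ and $\langle T^{\ast}Tf, f\rangle_{\mathcal{A}}=\sum_{i}\langle\Lambda_{i}f, \Lambda_{i}f\rangle_{\mathcal{A}}$ gives the lower $\ast$-K-g-frame bound. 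The upper bound is the symmetric application of Theorem~\ref{t1} with $K\leftrightarrow T^{\ast}$ and $L\leftrightarrow L_{B^{\ast}}$, producing $T^{\ast}T\leq L_{B^{\ast}}L_{B}=L_{B^{\ast}B}$; then centrality of $B$ gives $B^{\ast}B\langle f, f\rangle_{\mathcal{A}}=\langle f, f\rangle_{\mathcal{A}}B^{\ast}B=\langle f, f\rangle_{\mathcal{A}}BB^{\ast}=B\langle f, f\rangle_{\mathcal{A}}B^{\ast}$, recovering the upper operator bound.

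\textbf{Main obstacle.} The delicate bookkeeping is identifying the adjoints of compositions like $L_{A^{\ast}}K$ and recognizing that the norm estimate $\|AK^{\ast}f\|\leq\|Tf\|$ is exactly the hypothesis (ii) of Theorem~\ref{t1} in the setup above. Centrality of $A$ and $B$ is indispensable: without it, $L_{A}$ and $L_{B}$ fail to be adjointable, and the reduction to Theorem~\ref{t1} breaks down entirely. A secondary point, tacit in the statement, is that the upper-bound step also needs $\overline{\mathrm{Rang}(L_{B})}$ to be orthogonally complemented; one handles this by noting that strictly nonzero central $B$ gives $L_{B}$ full range, making the complementation trivial.
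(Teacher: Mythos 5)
Your proof is correct and follows essentially the same route as the paper: the forward direction applies the $C^{\ast}$-norm (order-preserving on positives) to the defining inequality after rewriting $A\langle K^{\ast}f,K^{\ast}f\rangle_{\mathcal{A}}A^{\ast}=\langle AK^{\ast}f,AK^{\ast}f\rangle_{\mathcal{A}}$, and the converse reduces to Theorem~\ref{t1} via the analysis operator $T$. Your version of the converse is in fact more complete than the paper's, which merely cites Theorem~\ref{t1} without naming the operators; your identification of $L_{A}$, $L_{B}$ as adjointable left multiplications (where centrality is used) and your remark on the complementation of $\overline{\mathrm{Rang}(L_{B})}$ supply details the paper leaves implicit, the only cosmetic slip being that Theorem~\ref{t1} yields $MM^{\ast}\leq\lambda T^{\ast}T$ for some $\lambda>0$ rather than with $\lambda=1$, which harmlessly rescales the frame bounds.
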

	\begin{proof}
		
$\Rightarrow )$ 
		
Let $\{\Lambda_{i}\in End_{\mathcal{A}}^{\ast}(\mathcal{H},\mathcal{H}_{i}),i\in I \}$ be a $\ast$-K-g-frame in Hilbert $\mathcal{A}$-module $\mathcal{H}$ with bounds A and B then, for all $f$ in $\mathcal{H}$ we have,
		\begin{equation*}
		A\langle K^{\ast}f,K^{\ast}f\rangle_{\mathcal{A}} A^{\ast}\leq \sum_{i\in I}\langle \Lambda_{i}f,\Lambda_{i}f\rangle_{\mathcal{A}} \leq B\langle f,f\rangle_{\mathcal{A}} B^{\ast}.	
		\end{equation*}
Then,
		\begin{equation*}
			0\leq \langle AK^{\ast}f,AK^{\ast}f\rangle_{\mathcal{A}} \leq \sum_{i\in I}\langle \Lambda_{i}f,\Lambda_{i}f\rangle_{\mathcal{A}} \leq \langle Bf,Bf\rangle_{\mathcal{A}}.
		\end{equation*}
So,
			\begin{equation*}
		\| \langle AK^{\ast}f,AK^{\ast}f\rangle_{\mathcal{A}} \| \leq \|\sum_{i\in I}\langle \Lambda_{i}f,\Lambda_{i}f\rangle_{\mathcal{A}} \| \leq \|\langle Bf,Bf\rangle_{\mathcal{A}} \|. 
		\end{equation*}
	Finally we obtain,
			\begin{equation*}
			\|AK^{\ast}f\|^{2} \leq \|\sum_{i\in I}\langle \Lambda_{i}f,\Lambda_{i}f\rangle_{\mathcal{A}}\|\leq \|Bf\|^{2}. 
		\end{equation*}
\\
		$\Leftarrow )$ For $ f$ in $\mathcal{H}$, we have:
		\begin{equation*}
		\|Tf\|^{2} = \|\langle Tf,Tf\rangle_{\mathcal{A}} \| = \|\sum_{i\in I}\langle \Lambda_{i}f,\Lambda_{i}f\rangle_{\mathcal{A}}\| \leq \|Bf\|^{2}= \|B\|^{2}\|f\|^{2}.
		\end{equation*}
		Then,
		\begin{equation*}
		 \|T f\| \leq \|B\| \|f\|
		 \end{equation*}
		It is clearly wich show that $T$ is a bounded. On the other hand $T$ is adjointable and it's adjointable operator is $T^{\ast}$ is given by,
		\begin{equation*} 
		T^{\ast}(\{f_{i}\}_{i \in I}) = \sum_{i\in I}\Lambda_{i}^{\ast}f_{i}
		\end{equation*}
		
		By Theorem \ref{t1}, there existe $\lambda , \mu > 0 $ such that for every $ f \in \mathcal{H} $, we have,
		\begin{equation*}
		(\sqrt{\lambda}A)\langle K^{\ast}f, K^{\ast}f\rangle_{\mathcal{A}} (\sqrt{\lambda}A)^{\ast} \leq \langle\sum_{i\in I}\Lambda_{i}f,\Lambda_{i}f\rangle_{\mathcal{A}}\leq (\sqrt{\mu}B)\langle f, f\rangle_{\mathcal{A}} (\sqrt{\mu}B)^{\ast}. 
		\end{equation*}		
	\end{proof}
\begin{proposition}\label{PP2}
	
Let $\{\Lambda_{i}\in End_{\mathcal{A}}^{\ast}(\mathcal{H},\mathcal{H}_{i}), i\in I \}$  be a $\ast$-K-g-frame with bounds A and B then:
	\begin{equation}
	\|AK^{\ast}f\|^{2} \leq \|\sum_{i\in I}\langle \Lambda_{i}f,\Lambda_{i}f\rangle_{\mathcal{A}}\|\leq \|Bf\|^{2} , \qquad f \in \mathcal{H}.
	\end{equation}
	Conversely, if (3.2) holds, for some $A,B \in \mathcal{Z}(\mathcal{A})$, and $T$ has closed range, then $\{\Lambda_{i}\}_{i\in I}$ is a $\ast$-K-g frames.
	
\end{proposition}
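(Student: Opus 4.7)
The statement is essentially a duplicate of the preceding proposition with the hypothesis \emph{$\overline{\mathrm{Rang}(T)}$ is orthogonally complemented} replaced by \emph{$T$ has closed range}, and Theorem \ref{t1} correspondingly replaced by Theorem \ref{t2}. My plan is therefore to run the same argument as in the previous proposition, being careful about the two places where the change of hypothesis is exploited.

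For the forward direction, I would start from the defining inequality of the $\ast$-K-g-frame and apply the $C^{\ast}$-norm to each side. The $C^{\ast}$-norm is isotone on positive elements, so
\[
\|A\langle K^{\ast}f,K^{\ast}f\rangle_{\mathcal{A}}A^{\ast}\|\;\le\;\Bigl\|\sum_{i\in I}\langle \Lambda_{i}f,\Lambda_{i}f\rangle_{\mathcal{A}}\Bigr\|\;\le\;\|B\langle f,f\rangle_{\mathcal{A}}B^{\ast}\|.
\]
Using $\langle a\xi,a\xi\rangle_{\mathcal{A}}=a\langle \xi,\xi\rangle_{\mathcal{A}}a^{\ast}$ together with $\|\langle y,y\rangle_{\mathcal{A}}\|=\|y\|^{2}$ collapses each outer term to $\|AK^{\ast}f\|^{2}$ and $\|Bf\|^{2}$, giving the claimed chain.

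For the converse, the right-hand norm inequality immediately yields $\|Tf\|\le\|B\|\,\|f\|$, so $T$ is bounded, and the explicit formula $T^{\ast}(\{f_{i}\})=\sum_{i}\Lambda_{i}^{\ast}f_{i}$ exhibits an adjoint, so $T\in End^{\ast}_{\mathcal{A}}(\mathcal{H},\oplus_{i}\mathcal{H}_{i})$. The centrality assumption $A,B\in \mathcal{Z}(\mathcal{A})$ makes left multiplication by $A$ and $B$ adjointable on $\mathcal{H}$, so $AK^{\ast}$ and $L_{B}$ are genuine elements of $End^{\ast}_{\mathcal{A}}(\mathcal{H})$. With $T$ of closed range, the frame operator $S=T^{\ast}T$ also has closed range (and the same holds for $S^{1/2}$, since $R(S)=R(S^{1/2})$ for a positive adjointable operator with closed range). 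This is exactly the hypothesis of Theorem \ref{t2}, which I would apply twice: once to convert $\|AK^{\ast}f\|\le\|Tf\|=\|S^{1/2}f\|$ into an operator inequality $(KA^{\ast})(AK^{\ast})\le\lambda\,S$, and once to convert $\|Tf\|\le\|Bf\|$ into $T^{\ast}T\le\mu\,L_{B}L_{B^{\ast}}$. Pairing with $f$ in both cases and using $\langle a\xi,a\xi\rangle=a\langle\xi,\xi\rangle a^{\ast}$ rephrases these as the desired $\ast$-K-g-frame inequalities with bounds $A/\sqrt{\lambda}$ and $\sqrt{\mu}\,B$.

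The main obstacle is the mismatch between Theorem \ref{t2}, which is stated for operators on a single Hilbert module, and $T$, which maps $\mathcal{H}$ into $\oplus_{i\in I}\mathcal{H}_{i}$. The bridge is $\|Tf\|=\|S^{1/2}f\|$ via the frame operator, so that the theorem is actually applied with the pair $(AK^{\ast},S^{1/2})$ (and analogously with $(T^{\ast}T,L_{BB^{\ast}})$ for the upper bound), both pairs living inside $End^{\ast}_{\mathcal{A}}(\mathcal{H})$. Once this reduction is in place, the remainder is bookkeeping of constants, entirely parallel to the preceding proposition.
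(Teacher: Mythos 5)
Your proposal is correct and follows essentially the same route as the paper, whose entire proof of this proposition is the single line ``Similar to the proof of the last proposition''; in fact you supply more detail than the paper does, notably the reduction via the frame operator $S=T^{\ast}T$ and the identity $\|Tf\|=\|S^{1/2}f\|$, which is exactly what is needed to make Theorem \ref{t2} (stated only for endomorphisms of a single Hilbert module) applicable where the earlier proposition used Theorem \ref{t1}. One caveat worth recording: Theorem \ref{t2} as stated lists only the range-inclusion, operator-inequality and factorization conditions and has no norm-inequality clause analogous to condition (ii) of Theorem \ref{t1}, so your step converting $\|AK^{\ast}f\|\le\|S^{1/2}f\|$ into the operator inequality $(KA^{\ast})(AK^{\ast})\le\lambda S$ relies on the closed-range version of that equivalence, which the paper also uses implicitly without stating it.
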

\begin{proof}	
Similar to the proof of the last proposition.
\end{proof}
\begin{proposition}\label{PP3}
	
	Let $K,L \in End_{\mathcal{A}}^{\ast}(\mathcal{H})$ and $\{\Lambda_{i}\}_{i\in I}$ be a $\ast$-K-g-frames with bounds $A$ and $B$, then,
	\begin{itemize}
		\item [1)] If $U : \mathcal{H}\longmapsto \mathcal{H}$ is a co-isometry, such that $KU=UK$ then $\{\Lambda_{i}U^{\ast}\}_{i\in I}$ is a $\ast$-K-g-frames. 
		\item [2)] $\{\Lambda_{i}L^{\ast}\}_{i\in I}$ is a $\ast$-LK-g-frames with bounds $A$ and $B\|L\|$ respectively.
		\item [3)] $\{\Lambda_{i}(L^{\ast})^{n}\}_{i\in I}$ is a $\ast$-$L^{n}K$-$g$-frames, for any $n \in \mathbb{N}$.
		\item [4)]If $R(L) \subseteq R(K)$ and $K$ has closed range then $\{\Lambda_{i}\}_{i\in I}$ is also\\ $\ast$-L-g-frames.		
	\end{itemize}
\begin{proof}
	\begin{itemize}
		\item [1)] Let $\{\Lambda_{i}\}_{i\in I}$ be a $\ast$-K-g-frames with bounds $A$ and $B$, then,
		\begin{equation*}
		A\langle K^{\ast}f,K^{\ast}f\rangle A^{\ast} \leq \sum_{i\in I} \langle \Lambda_{i}f,\Lambda_{i}f\rangle \leq B\langle  f,f\rangle B^{\ast}, \qquad f \in \mathcal{H}.
		\end{equation*}
		
		On one hand, we have,
		\begin{equation*}
		\sum_{i\in I} \langle \Lambda_{i}U^{\ast}f,\Lambda_{i}U^{\ast}f\rangle \leq B\langle  U^{\ast}f,U^{\ast}f\rangle B^{\ast}=B\langle  f,f\rangle B^{\ast}, \qquad f \in \mathcal{H}.
		\end{equation*}
		
		On the other hand, we have , 
		
		\begin{align*}
		A\langle K^{\ast}f,K^{\ast}f\rangle A^{\ast}&=A\langle U^{\ast}K^{\ast}f,U^{\ast}K^{\ast}f\rangle A^{\ast}\\
		&=A\langle K^{\ast}U^{\ast}f,K^{\ast}U^{\ast}f\rangle A^{\ast}\\
		&\leq \sum_{i\in I} \langle \Lambda_{i}U^{\ast}f,\Lambda_{i}U^{\ast}f\rangle \qquad f\in \mathcal {H}.
		\end{align*}

		Wich proves (1).

		\item [2)] For any $f$ in $\mathcal{H}$, we have, 
		
		\begin{equation*}
		A\langle (LK)^{\ast}f,(LK)^{\ast}f\rangle A^{\ast} = A\langle K^{\ast}L^{\ast}f,K^{\ast}L^{\ast}f\rangle A^{\ast} \leq  \sum_{i\in I} \langle \Lambda_{i}L^{\ast}f,\Lambda_{i}L^{\ast}f\rangle \leq B\langle L^{\ast}f,L^{\ast}f\rangle B^{\ast}.
		\end{equation*}
		
		Since,
		
		\begin{equation*}
		B\langle L^{\ast}f,L^{\ast}f\rangle B^{\ast}\leq (B\|L\|)\langle f, f \rangle (B\|L\|)^{\ast},
		\end{equation*}
		
		then,
		
		\begin{equation*}
		\sum_{i\in I} \langle \Lambda_{i}L^{\ast}f,\Lambda_{i}L^{\ast}f\rangle\leq (B\|L\|)\langle f, f \rangle (B\|L\|)^{\ast}.
		\end{equation*}
				
		\item [3)] Obvious by (2).
		\item [4)]	By Theorem \ref{t2}, there exists a positive real number $\lambda > 0 $ such that for all $f \in \mathcal{H}$, we have,
		\begin{equation*}
		\lambda LL^{\ast}f \leq KK^{\ast}f.
		\end{equation*}
	 Thus,
		\begin{equation*}
		(\sqrt{\lambda}A)\langle L^{\ast}f,L^{\ast}f\rangle (\sqrt{\lambda}A)^{\ast} \leq A\langle K^{\ast}f,K^{\ast}f\rangle A^{\ast}\leq \sum_{i\in I} \langle \Lambda_{i}f,\Lambda_{i}f\rangle \leq B\langle  f,f\rangle B^{\ast}.
		\end{equation*}
		
	\end{itemize}
\end{proof}

\end{proposition}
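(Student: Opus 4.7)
The plan is to establish each of the four claims by reducing to the $\ast$-K-g-frame inequality for the original family $\{\Lambda_{i}\}$. In each case I would substitute a suitably chosen vector into that inequality and then post-process using the structural hypothesis (co-isometry, norm bound, or range inclusion). Parts (1)--(3) are largely algebraic; part (4) is the substantive step and requires Theorem \ref{t2}.

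For (1), I would substitute $U^{\ast}f$ into the $\ast$-K-g-frame inequality. The upper bound collapses since $\langle U^{\ast}f, U^{\ast}f\rangle_{\mathcal{A}} = \langle UU^{\ast}f, f\rangle_{\mathcal{A}} = \langle f, f\rangle_{\mathcal{A}}$ by the co-isometry property $UU^{\ast}=I$. For the lower bound, the commutation $KU=UK$ gives $K^{\ast}U^{\ast}=U^{\ast}K^{\ast}$, so $\langle K^{\ast}U^{\ast}f, K^{\ast}U^{\ast}f\rangle_{\mathcal{A}} = \langle U^{\ast}K^{\ast}f, U^{\ast}K^{\ast}f\rangle_{\mathcal{A}} = \langle K^{\ast}f, K^{\ast}f\rangle_{\mathcal{A}}$, again by $UU^{\ast}=I$. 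For (2), I substitute $L^{\ast}f$; the lower bound rewrites via $K^{\ast}L^{\ast}=(LK)^{\ast}$, and the upper bound uses the standard $C^{\ast}$-module estimate $\langle L^{\ast}f, L^{\ast}f\rangle_{\mathcal{A}} = \langle LL^{\ast}f, f\rangle_{\mathcal{A}} \leq \|L\|^{2}\langle f, f\rangle_{\mathcal{A}}$ (from $LL^{\ast} \leq \|L\|^{2}I$) together with order-preservation under sandwiching by $B$. Part (3) follows by an immediate induction from (2): if $\{\Lambda_{i}(L^{\ast})^{n}\}$ is a $\ast$-$L^{n}K$-g-frame, applying (2) with $L$ promotes it to $\{\Lambda_{i}(L^{\ast})^{n+1}\}$, which is a $\ast$-$L(L^{n}K)$-g-frame, i.e.\ a $\ast$-$L^{n+1}K$-g-frame.

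Part (4) is where the main work lies. Since $R(L) \subseteq R(K)$ and $R(K)$ is closed, Theorem \ref{t2} supplies a scalar $\lambda > 0$ with $\lambda LL^{\ast} \leq KK^{\ast}$. Pairing with $f$ gives the $\mathcal{A}$-valued inequality $\lambda \langle L^{\ast}f, L^{\ast}f\rangle_{\mathcal{A}} \leq \langle K^{\ast}f, K^{\ast}f\rangle_{\mathcal{A}}$. Sandwiching by $A$ and chaining with the original lower bound yields
\[
(\sqrt{\lambda}\,A)\langle L^{\ast}f, L^{\ast}f\rangle_{\mathcal{A}}(\sqrt{\lambda}\,A)^{\ast} \leq A\langle K^{\ast}f, K^{\ast}f\rangle_{\mathcal{A}}A^{\ast} \leq \sum_{i\in I}\langle \Lambda_{i}f, \Lambda_{i}f\rangle_{\mathcal{A}},
\]
while the upper bound $B\langle f,f\rangle_{\mathcal{A}}B^{\ast}$ is inherited unchanged from the original frame (it never involved $K$). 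Hence $\{\Lambda_{i}\}$ is a $\ast$-L-g-frame with bounds $\sqrt{\lambda}\,A$ and $B$.

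The only delicate point I anticipate is the application of Theorem \ref{t2} in part (4): one needs $\lambda$ to be a positive real scalar (rather than merely a positive element of $\mathcal{A}$) so that $\sqrt{\lambda}$ commutes freely with $A$ and $A^{\ast}$, giving $(\sqrt{\lambda}A)X(\sqrt{\lambda}A)^{\ast} = \lambda AXA^{\ast}$. Elsewhere, the principal care needed is with the $C^{\ast}$-module order: each inequality in $\mathcal{A}^{+}$ must be checked to survive conjugation by an arbitrary $C^{\ast}$-algebra element, which is a routine consequence of $Y \geq X \Rightarrow B(Y-X)B^{\ast} \geq 0$.
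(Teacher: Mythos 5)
Your proposal is correct and follows essentially the same route as the paper's proof: substituting $U^{\ast}f$ and $L^{\ast}f$ into the defining inequality for parts (1)--(3), and invoking Theorem \ref{t2} to obtain $\lambda LL^{\ast}\leq KK^{\ast}$ and chain the lower bounds in part (4). Your added remarks on the scalar nature of $\lambda$ and on order-preservation under conjugation make explicit points the paper leaves implicit, but the argument is the same.
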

\section{Perturbation of $\ast$-K-g-frames}

\begin{theorem}	
Assume that $K, L \in End_{\mathcal{A}}^{\ast}(\mathcal{H})$, with $R(L) \subseteq R(K)$ and K has a closed range.
Let $\{\Lambda_{i}\}_{i\in I}$ be a $\ast$-K-g frame with bounds A and B.\\

If there exists a constant $M > 0$ such that for all $f$ in $\mathcal{H}$ :
	\begin{equation*}
\|\sum_{i\in I}\langle (\Lambda_{i}-\Gamma_{i})f,(\Lambda_{i}-\Gamma_{i})f\rangle_{\mathcal{A}}\| \leq M\min{\{\|\sum_{i\in I}\langle \Lambda_{i}f,\Lambda_{i}f\rangle_{\mathcal{A}}\|;\|\sum_{i\in I}\langle \Gamma_{i}f,\Gamma_{i}f\rangle_{\mathcal{A}}\|\}}.
	\end{equation*}
	Then $\{\Gamma_{i}\}_{i\in I}$ is a $\ast$-L-g-frames.
	If K is a co-isometry, $R(K) \subseteq R(L)$ and R(L) is closed then the converse is valid.
\end{theorem}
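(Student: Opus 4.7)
The plan is to transfer the $\ast$-frame inequalities from the $\ast$-K-g-frame $\{\Lambda_i\}$ to the perturbed sequence $\{\Gamma_i\}$ via the triangle inequality at the level of analysis operators, after first upgrading $\{\Lambda_i\}$ itself to a $\ast$-L-g-frame using Proposition \ref{PP3}(4).

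First, I would denote by $T_\Lambda$ and $T_\Gamma$ the analysis operators of $\{\Lambda_i\}$ and $\{\Gamma_i\}$, so the perturbation hypothesis rewrites compactly as $\|(T_\Lambda-T_\Gamma)f\|^2 \leq M\min\{\|T_\Lambda f\|^2,\|T_\Gamma f\|^2\}$. Next, Theorem \ref{t2} applied to the pair $(K,L)$ (using $Rang(L)\subseteq Rang(K)$ and $K$ of closed range) furnishes $\lambda>0$ with $\lambda LL^*\leq KK^*$, so Proposition \ref{PP3}(4) yields
\begin{equation*}
(\sqrt{\lambda}A)\langle L^*f,L^*f\rangle_{\mathcal{A}}(\sqrt{\lambda}A)^* \leq \sum_{i\in I}\langle\Lambda_i f,\Lambda_i f\rangle_{\mathcal{A}} \leq B\langle f,f\rangle_{\mathcal{A}} B^*.
\end{equation*}

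The upper norm bound for $\{\Gamma_i\}$ then follows from $\|T_\Gamma f\|\leq\|T_\Lambda f\|+\|(T_\Lambda-T_\Gamma)f\|\leq(1+\sqrt{M})\|Bf\|$, while the lower norm bound comes from the reverse triangle inequality with the $\|T_\Gamma f\|$ side of the min, giving $\|T_\Lambda f\|\leq(1+\sqrt{M})\|T_\Gamma f\|$ and hence $\|\tfrac{\sqrt{\lambda}}{1+\sqrt{M}}AL^*f\|^2\leq\|\sum_{i\in I}\langle\Gamma_i f,\Gamma_i f\rangle_{\mathcal{A}}\|$. Lifting these two norm inequalities to the required $\mathcal{A}$-valued operator form will be handled through the converse direction of Proposition \ref{PP2}, which itself rests on Theorem \ref{t1}. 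I expect this final lift to be the main technical obstacle: the converse requires the analysis operator of $\{\Gamma_i\}$ to have closed range, and I would argue that this property is inherited from $T_\Lambda$ through the uniform two-sided norm equivalence $\tfrac{1}{1+\sqrt{M}}\|T_\Lambda f\|\leq\|T_\Gamma f\|\leq(1+\sqrt{M})\|T_\Lambda f\|$.

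For the converse assertion, under $K$ a co-isometry with $Rang(K)\subseteq Rang(L)$ and $Rang(L)$ closed, I would swap the roles of $K$ and $L$ throughout: Proposition \ref{PP3}(4) now promotes any $\ast$-L-g-frame $\{\Gamma_i\}$ to a $\ast$-K-g-frame, the identity $KK^*=I$ collapses the $\ast$-K-g-frame inequality for $\{\Lambda_i\}$ to a plain $\ast$-g-frame form (since $\langle K^*f,K^*f\rangle_{\mathcal{A}}=\langle f,f\rangle_{\mathcal{A}}$ in this case), and the same triangle-inequality transfer then runs in the opposite direction to recover the perturbation estimate.
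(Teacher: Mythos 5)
You take essentially the same route as the paper: the triangle inequality applied to the square roots of the frame sums (equivalently, to the analysis operators $T_{\Lambda}$ and $T_{\Gamma}$) yields $\|T_{\Gamma}f\|\leq(1+\sqrt{M})\|Bf\|$ and $\|T_{\Lambda}f\|\leq(1+\sqrt{M})\|T_{\Gamma}f\|$, these norm inequalities are lifted to the $\mathcal{A}$-valued form through the converse of Proposition \ref{PP2}, Theorem \ref{t2} together with Proposition \ref{PP3}(4) converts between $K$ and $L$, and the converse direction likewise rests on the co-isometry identity $\langle K^{\ast}f,K^{\ast}f\rangle_{\mathcal{A}}=\langle f,f\rangle_{\mathcal{A}}$ and the bound $\|K^{\ast}f\|\leq\lambda\|L^{\ast}f\|$, exactly as in the paper. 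The only differences are cosmetic --- you perform the $K$-to-$L$ conversion before the perturbation estimate rather than after --- together with your explicit (and warranted) attention to the closed-range hypothesis needed to invoke Proposition \ref{PP2}, a point the paper passes over in silence.
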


\begin{proof} 
Let $f\in \mathcal{H}$, we have :
	\begin{align*}
	\|(\Gamma_{i}f)_{i\in I}\|	
	&=\|(\Gamma_{i}f)_{i\in I} - (\Lambda_{i}f)_{i\in I} + (\Lambda_{i}f)_{i\in I}\|\\
	&\leq \|(\Gamma_{i}f)_{i\in I} - (\Lambda_{i}f)_{i\in I}\| + \| (\Lambda_{i}f)_{i\in I}\|. 
		\end{align*}
On one hand, we have,
		\begin{align*}
	\|\sum_{i\in I}\langle \Gamma_{i}f,\Gamma_{i}f\rangle_{\mathcal{A}}\|^{1/2}&\leq \sqrt{M}\|\sum_{i\in I}\langle \Lambda_{i}f,\Lambda_{i}f\rangle_{\mathcal{A}}\|^{1/2} + \|\sum_{i\in I}\langle \Lambda_{i}f,\Lambda_{i}f\rangle_{\mathcal{A}}\|^{1/2}\\
		&\leq (1+\sqrt{M})(\|\sum_{i\in I}\langle \Lambda_{i}f,\Lambda_{i}f\rangle_{\mathcal{A}}\|^{1/2})\\ 
&\leq (1+\sqrt{M})(\|B\|\|f\|).
		\end{align*}
		
	On other hand, we have:
	
	\begin{align*}
	\| \sum_{i\in I}\langle \Lambda_{i}f, \Lambda_{i}f \rangle\|^{1/2} &\leq \|\sum_{i\in I}\langle (\Lambda_{i}-\Gamma_{i})f,(\Lambda_{i}-\Gamma_{i})f\rangle_{\mathcal{A}}\|^{1/2} + \|\sum_{i\in I}\langle \Gamma_{i}f,\Gamma_{i}f\rangle_{\mathcal{A}}\|^{1/2}\\
	&\leq \sqrt{M}\|\sum_{i\in I}\langle \Gamma_{i}f,\Gamma_{i}f\rangle_{\mathcal{A}}\|^{1/2} + \|\sum_{i\in I}\langle \Gamma_{i}f,\Gamma_{i}f\rangle_{\mathcal{A}}\|^{1/2} \\
	&\leq (1+\sqrt{M})(\|\sum_{i\in I}\langle \Gamma_{i}f,\Gamma_{i}f\rangle_{\mathcal{A}}\|^{1/2}).
	\end{align*}
	
	Thus, 
	
	\begin{equation*}
	\frac{\|A\|^{2}}{(1+\sqrt{M})^{2}}\|K^{\ast}f\|^{2}\leq \frac{1}{(1+\sqrt{M})^{2}}\|\sum_{i\in I}\langle \Lambda_{i}f,\Lambda_{i}f\rangle_{\mathcal{A}}\| \leq \|\sum_{i\in I}\langle \Gamma_{i}f,\Gamma_{i}f\rangle_{\mathcal{A}}\|.
	\end{equation*}
	Finally, we obtain
	\begin{equation*}
	\frac{\|A\|^{2}}{(1+\sqrt{M})^{2}}\|K^{\ast}f\|^{2}\leq \|\sum_{i\in I}\langle \Gamma_{i}f,\Gamma_{i}f\rangle_{\mathcal{A}}\|\leq (1+\sqrt{M})^{2}(\|B\|^{2}\|f\|^{2}).
	\end{equation*}
	
	Then by Proposion \ref{PP2} we conclude that  $\{\Gamma_{i}\}_{i \in I}$ is a $\ast$-K-g-frame. On the other hand, since $R(L) \subseteq R(K)$ and $K$ has a closed range, by proposition \ref{PP3}, we conclude that $\{\Gamma_{i}\}_{i \in I}$ is a $\ast$-L-g-frames.
	
	Conversely;
	since that $\{\Gamma_{i}\}_{i \in I}$  is a $\ast$-L-g-frames with bounds C and D.\\
	On one hand, we have for all $f$ in $\mathcal{H}$,
	
		\begin{align*}
	\|\sum_{i\in I}\langle (\Lambda_{i}-\Gamma_{i})f,(\Lambda_{i}-\Gamma_{i})f\rangle_{\mathcal{A}}\|^{1/2} &\leq \|\sum_{i\in I}\langle (\Lambda_{i}f,(\Lambda_{i})f\rangle_{\mathcal{A}}\|^{1/2} + \|\sum_{i\in I}\langle (\Gamma_{i}f,(\Gamma_{i})f\rangle_{\mathcal{A}}\|^{1/2}\\
	&\leq \|\sum_{i\in I}\langle (\Lambda_{i}f,(\Lambda_{i})f\rangle_{\mathcal{A}}\|^{1/2} + \|D\|\|f\|\\
	& \leq \|\sum_{i\in I}\langle (\Lambda_{i}f,(\Lambda_{i})f\rangle_{\mathcal{A}}\|^{1/2} + \|D\|\|K^{\ast}f\|\\
	&\leq \|\sum_{i\in I}\langle (\Lambda_{i}f,(\Lambda_{i})f\rangle_{\mathcal{A}}\|^{1/2} + \frac{\|D\|}{\|A\|}\|\sum_{i\in I}\langle (\Lambda_{i}f,(\Lambda_{i})f\rangle_{\mathcal{A}}\|^{1/2}\\
	&\leq (1+\frac{\|D\|}{\|A\|})\|\sum_{i\in I}\langle (\Lambda_{i}f,(\Lambda_{i})f\rangle_{\mathcal{A}}\|^{1/2}.
	\end{align*}	
	On the other hand we have,
	
	\begin{align*}
	\|\sum_{i\in I}\langle (\Lambda_{i}-\Gamma_{i})f,(\Lambda_{i}-\Gamma_{i})f\rangle_{\mathcal{A}}\|^{1/2} &\leq \|\sum_{i\in I}\langle \Lambda_{i}f,\Lambda_{i}f\rangle_{\mathcal{A}}\|^{1/2} + \|\sum_{i\in I}\langle \Gamma_{i}f,\Gamma_{i}f\rangle_{\mathcal{A}}\|^{1/2}\\
&\leq \|\sum_{i\in I}\langle \Gamma_{i}f,\Gamma_{i}f\rangle_{\mathcal{A}}\|^{1/2} + \|B\|\|f\|\\
&\leq \|\sum_{i\in I}\langle \Gamma_{i}f,\Gamma_{i}f\rangle_{\mathcal{A}}\|^{1/2} + \|B\|\|K^{\ast}f\|.
	\end{align*}
By Theorem \ref{t2}, there exist $\lambda \geq 0$ such that 
\begin{equation*}
KK^{\ast}\leq \lambda^{2}LL^{\ast},
\end{equation*}
 then
	\begin{equation*}
	 \|K^{\ast}f\|\leq \lambda \|L^{\ast}f\|.
	\end{equation*}
We conclude that,
	
	\begin{align*}
	\|\sum_{i\in I}\langle (\Lambda_{i}-\Gamma_{i})f,(\Lambda_{i}-\Gamma_{i})f\rangle_{\mathcal{A}}\|^{1/2}&\leq \|\sum_{i\in I}\langle \Gamma_{i}f,\Gamma_{i}f\rangle_{\mathcal{A}}\|^{1/2} + \lambda\|B\|\|L^{\ast}f\|\\
	&\leq \frac{1+\lambda\|B\|}{\|C\|}\|\sum_{i\in I}\langle \Gamma_{i}f,\Gamma_{i}f\rangle_{\mathcal{A}}\|^{1/2}.
	\end{align*}
	
	Then 
	
	\begin{equation*}
	\|\sum_{i\in I}\langle (\Lambda_{i}-\Gamma_{i})f,(\Lambda_{i}-\Gamma_{i})f\rangle_{\mathcal{A}}\| \leq M\min{\{\|\sum_{i\in I}\langle \Lambda_{i}f,\Lambda_{i}f\rangle_{\mathcal{A}}\|;\|\sum_{i\in I}\langle \Gamma_{i}f,\Gamma_{i}f\rangle_{\mathcal{A}}\|\}}
	\end{equation*}
	
	such that $M=min(\frac{1+\lambda\|B\|}{\|C\|},(1+\frac{\|D\|}{\|A\|}))$.
	
	\end{proof}

\section{The dual of $\ast$-K-g-frames}

\begin{definition}	
Let $\mathcal{A}$ be a unital $C^{\ast}$-algebra and let  $\mathcal{H}$ an Hilbert $\mathcal{A}$-module over a unital $C^{\ast}$-algebra and $K \in End_{\mathcal{A}}^{\ast}(\mathcal{H})$. Let $\{ \Lambda_{i} \in End_{\mathcal{A}}^{\ast}(\mathcal{H},\mathcal{H}_{i}), i\in I \}$ be a $\ast$-K-g-frames in  $\mathcal{H}$ associated to $\{\mathcal{H}_{i}\}_{i\in I}$. A $\ast$-K-g-Bessel sequence $\{ \Gamma_{i} \in End_{\mathcal{A}}^{\ast}(\mathcal{H},\mathcal{H}_{i}), i\in I \}$ is called a dual $\ast$-K-g-frames for $\{ \Lambda_{i}\}_{i \in I}$ if,
\begin{equation*}
Kf=\sum_{i\in I}\Lambda_{i}^{\ast}\Gamma_{i}f, \qquad f\in \mathcal{H}. 	
\end{equation*}

\end{definition}

\begin{example}	
Let $K {\in End_\mathcal{A}}^{\ast}(\mathcal{H})$ be a surjective operator and $\{\Lambda_{i} \in End_{\mathcal{A}}^{\ast}(\mathcal{H},\mathcal{H}_{i})\}_{i\in I}$ be a $\ast$-K-g-frames in $\mathcal{H}$ associated to $\{\mathcal{H}_{i}\}_{i\in I}$ with $\ast$-K-g-frames operator $S$.\\
By  \cite{RK2}, $S$ is invertible.
	
	For all $f \in \mathcal{H}$ we have :
	\begin{equation*}
	Sf=\sum_{i\in I}\Lambda_{i}^{\ast}\Lambda_{i}f.
	\end{equation*}
	Then,
	\begin{equation*}
	Kf=\sum_{i\in I}\Lambda_{i}^{\ast}\Lambda_{i}S^{-1}Kf.
	\end{equation*}
	
Then the sequence  $\{ \Lambda_{i}S^{-1}K \in End_{\mathcal{A}}^{\ast}(\mathcal{H},\mathcal{H}_{i}), i\in I \}$ is a ${\ast}$-K-g frame, and is a dual ${\ast}$-K-g frame of  $\{ \Lambda_{i} \in End_{\mathcal{A}}^{\ast}(\mathcal{H},\mathcal{H}_{i}), i\in I \}$ 
\end{example}
\begin{example}
Let $\mathcal{H}$ be a Hilbert $C^{\ast}$-modules and let $(e_{i})_{i\geq 1}$ be an orthonormal basis for $\mathcal{H}$.

We define an operator $K$ by,
\begin{align*}
K : \mathcal{H}&\longrightarrow \mathcal{H}\\
e_{i}&\longrightarrow Ke_{i}, \quad Ke_{2i}=e_{2i}+e_{2i-1}
\end{align*}
By a simple calculation, we have,
\begin{equation*}
Kf=\sum_{i\geq 1}\langle f,e_{2i}\rangle (e_{2i}+e_{2i-1}),
\end{equation*}
and,
\begin{equation*}
K^{\ast}f=\sum_{i\geq 1}\langle f,e_{2i}+e_{2i-1}\rangle e_{2i}.
\end{equation*}
Let $\mathcal{H}_{i}=\overline{span}(e_{i} + e_{i+1})$ \quad for any $i\geq 1$\\
We define the sequences of operators $\{\Lambda_{i}\}_{i\geq 1}$ and $\{\Gamma_{i}\}_{i\geq 1}$ by,
\begin{align*}
\Lambda_{i} : \mathcal{H}_{i}&\longrightarrow \mathcal{H}\\
f&\longrightarrow \Lambda_{i} f=\langle f,e_{2i}+e_{2i-1}\rangle e_{i}
\end{align*}
and,
\begin{align*}
\Gamma_{i} : \mathcal{H}_{i}&\longrightarrow \mathcal{H}\\
f&\longrightarrow \Gamma_{i} f=\langle f,e_{2i}\rangle e_{i}.
\end{align*}
It easy to schow that,
\begin{equation*}
\langle K^{\ast}f,K^{\ast}f\rangle = \sum_{i\geq 1}\langle \Lambda_{i}f,\Lambda_{i}f\rangle 
\end{equation*}
Then $\{\Lambda_{i}\}_{i\geq 1}$ is a normalised tight $\ast$-k-g frames for $\mathcal{H}$. Moreover, we have
\begin{equation*}
Kf=\sum_{i\geq 1}\Lambda^{\ast}_{i}\Gamma_{i}f.
\end{equation*}
Which shows that $\{\Gamma_{i}\}_{i\geq 1}$ is the dual of $\{\Lambda_{i}\}_{i\geq 1}$.
\end{example} 

	
\section{Tensor Product}
	
\begin{theorem}		
Let $\{\Lambda_{i}\}_{i\in I} $ and $\{\Gamma_{j}\}_{j\in J} $ be $\ast$-K-g-frames and $\ast$-L-g-frames respectively in $\mathcal{H}$, with the duals $\{\tilde{\Lambda_{i}}\}_{i\in I}$ and  $\{\tilde{\Gamma_{j}}\}_{j\in J} $ respectively. Then $\{\tilde{\Lambda_{i}}\otimes\tilde{\Gamma_{j}}\}_{i\in I,j\in J}$ is a dual of $\{\Lambda_{i}\otimes\Gamma_{j}\}_{i\in I,j\in J}$.
\end{theorem}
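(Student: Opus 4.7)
The plan is to unpack the definition of duality in the tensor product setting and reduce the identity to the two given duality relations component by component. By hypothesis we have, for every $f,g \in \mathcal{H}$,
\begin{equation*}
Kf = \sum_{i\in I}\Lambda_i^{\ast}\tilde{\Lambda_i}f, \qquad Lg = \sum_{j\in J}\Gamma_j^{\ast}\tilde{\Gamma_j}g,
\end{equation*}
and we must show that $\{\tilde{\Lambda_i}\otimes\tilde{\Gamma_j}\}_{i,j}$ is a $\ast$-$(K\otimes L)$-g-Bessel sequence satisfying
\begin{equation*}
(K\otimes L)u = \sum_{i\in I,\,j\in J}(\Lambda_i\otimes\Gamma_j)^{\ast}(\tilde{\Lambda_i}\otimes\tilde{\Gamma_j})\,u, \qquad u\in\mathcal{H}\otimes\mathcal{H}.
\end{equation*}

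The first step is to recall the standard tensor product identities for adjointable operators on Hilbert $\mathcal{A}$-modules, namely $(T\otimes S)^{\ast}=T^{\ast}\otimes S^{\ast}$ and $(T_1\otimes S_1)(T_2\otimes S_2)=(T_1T_2)\otimes(S_1S_2)$. Using these on a simple tensor $f\otimes g$, I would compute
\begin{align*}
\sum_{i,j}(\Lambda_i\otimes\Gamma_j)^{\ast}(\tilde{\Lambda_i}\otimes\tilde{\Gamma_j})(f\otimes g)
&= \sum_{i,j}(\Lambda_i^{\ast}\tilde{\Lambda_i}f)\otimes(\Gamma_j^{\ast}\tilde{\Gamma_j}g)\\
&= \Bigl(\sum_{i}\Lambda_i^{\ast}\tilde{\Lambda_i}f\Bigr)\otimes\Bigl(\sum_{j}\Gamma_j^{\ast}\tilde{\Gamma_j}g\Bigr)\\
&= Kf\otimes Lg = (K\otimes L)(f\otimes g),
\end{align*}
where the factorization of the double sum uses bilinearity of $\otimes$ together with norm convergence of each marginal series (guaranteed by the Bessel/frame property of the original systems).

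The second step is to extend this identity from simple tensors to all of $\mathcal{H}\otimes\mathcal{H}$ by linearity and continuity; this requires checking that the partial sum operators $P_{F,G}:=\sum_{(i,j)\in F\times G}(\Lambda_i\otimes\Gamma_j)^{\ast}(\tilde{\Lambda_i}\otimes\tilde{\Gamma_j})$ are uniformly bounded, which in turn follows from the Bessel bounds of $\{\Lambda_i\},\{\Gamma_j\},\{\tilde{\Lambda_i}\},\{\tilde{\Gamma_j}\}$: the analysis and synthesis operators tensor nicely, so $\{\tilde{\Lambda_i}\otimes\tilde{\Gamma_j}\}$ is itself a $\ast$-g-Bessel sequence for $\mathcal{H}\otimes\mathcal{H}$ (which is what is needed for $\{\tilde{\Lambda_i}\otimes\tilde{\Gamma_j}\}$ to qualify as a dual in the sense of the preceding definition).

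The main obstacle I anticipate is not algebraic but analytic: justifying the interchange/factorization of the double sum and the passage from simple tensors to arbitrary elements of the tensor product Hilbert $\mathcal{A}\otimes\mathcal{A}$-module. This is handled cleanly by observing that $\sum_{i,j}(\Lambda_i\otimes\Gamma_j)^{\ast}(\tilde{\Lambda_i}\otimes\tilde{\Gamma_j})$ equals $T_{\Lambda}^{\ast}\tilde{T}_{\Lambda}\otimes T_{\Gamma}^{\ast}\tilde{T}_{\Gamma}$ at the level of the analysis/synthesis operators, so the identity on simple tensors promotes to a bounded operator identity on the whole module. Once that is in place, the conclusion is immediate.
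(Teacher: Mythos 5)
Your proposal is correct and follows essentially the same route as the paper: the core computation on a simple tensor $f\otimes g$, using $(T\otimes S)^{\ast}=T^{\ast}\otimes S^{\ast}$ and the factorization of the double sum into the product of the two marginal sums, is exactly the paper's argument. The only difference is that you additionally address the extension from simple tensors to general elements and the Bessel property of $\{\tilde{\Lambda_i}\otimes\tilde{\Gamma_j}\}$, points the paper leaves implicit; this is extra rigor rather than a different method.
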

\begin{proof}		
By definition, for all $x \in \mathcal{H}$ and $ y \in \mathcal{K}$ we have,
\begin{equation*}
\sum_{i\in I}\Lambda_{i}^{\ast}\tilde{\Lambda_{i}}x=Kx  \qquad and \qquad \sum_{j\in J}\Gamma_{j}^{\ast}\tilde{\Gamma_{j}}y=Ly.  
\end{equation*}

Then : 
\begin{align*}
(K\otimes L)(x\otimes y)&= Kx\otimes Ly \\
&= \sum_{i\in I}\Lambda_{i}^{\ast}\tilde{\Lambda_{i}}x\otimes\sum_{j\in J}\Gamma_{j}^{\ast}\tilde{\Gamma_{j}}y\\
&=\sum_{i,j\in I,J}\Lambda_{i}^{\ast}\tilde{\Lambda_{i}}x\otimes\Gamma_{j}^{\ast}\tilde{\Gamma_{j}}y\\
&=\sum_{i,j\in I,J}(\Lambda_{i}^{\ast}\otimes\Gamma_{j}^{\ast})(\tilde{\Lambda_{i}}x\otimes\tilde{\Gamma_{j}}y)\\
&=\sum_{i,j\in I,J}(\Lambda_{i}\otimes\Gamma_{j})^{\ast}(\tilde{\Lambda_{i}}\otimes\tilde{\Gamma_{j}})(x\otimes y).
\end{align*}
		Then $\{\tilde{\Lambda_{i}}\otimes\tilde{\Gamma_{j}}\}_{i,j \in I,J}$ is a dual of $\{\Lambda_{i}\otimes\Gamma_{j}\}_{i,j \in I,J}$.
	\end{proof}
	\begin{corollary}		
		Let $\{\Lambda_{i,j}\}_{0\leq i\leq n; j\in J}$ be a family of $\ast$-$K_{i}$-g-frames, such $ 0 \leq i \leq n $ and $\{\tilde{\Lambda}_{i,j}\}_{0\leq i\leq n; j\in J}$ their dual, then $\{\tilde{\Lambda}_{0,j}\otimes \tilde{\Lambda}_{1,j}\otimes......\otimes\tilde{\Lambda}_{n,j}\}_{j\in J}$ is a dual of $\{\Lambda_{0,j}\otimes \Lambda_{1,j}\otimes......\otimes\Lambda_{n,j}\}_{j\in J}$.
	\end{corollary}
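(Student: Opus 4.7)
The plan is to argue by induction on $n$, using the preceding theorem both as the base case and as the driving mechanism of the induction. The underlying idea is that once the claim is proved for $n$ factors, one may regard the $n$-fold tensor product $\Lambda_{0,j}\otimes\cdots\otimes\Lambda_{n-1,j}$ as a single $\ast$-$(K_{0}\otimes\cdots\otimes K_{n-1})$-g-frame admitting $\tilde{\Lambda}_{0,j}\otimes\cdots\otimes\tilde{\Lambda}_{n-1,j}$ as a dual, and then attach the last factor by one further application of the theorem.

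Concretely, the base case $n=1$ is precisely the preceding theorem with $K=K_{0}$, $L=K_{1}$, $\{\Lambda_{i}\}=\{\Lambda_{0,j}\}_{j\in J}$, $\{\Gamma_{j}\}=\{\Lambda_{1,j}\}_{j\in J}$, and the prescribed duals. For the inductive step, assuming the claim for $n-1$ factors, I would apply the preceding theorem to the pair of families $\{\Lambda_{0,j}\otimes\cdots\otimes\Lambda_{n-1,j}\}_{j\in J}$ (with dual $\{\tilde{\Lambda}_{0,j}\otimes\cdots\otimes\tilde{\Lambda}_{n-1,j}\}_{j\in J}$) and $\{\Lambda_{n,j}\}_{j\in J}$ (with dual $\{\tilde{\Lambda}_{n,j}\}_{j\in J}$). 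The theorem then yields that the tensor product of the two duals is a dual with respect to $(K_{0}\otimes\cdots\otimes K_{n-1})\otimes K_{n}$, and associativity of $\otimes$ rewrites this in the form demanded by the corollary.

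The only non-bookkeeping point to address, and the main (if modest) obstacle, is to check that the inductive hypothesis genuinely supplies a $\ast$-K-g-frame together with a dual Bessel sequence in the sense required to invoke the theorem, and not just the reproducing identity $\sum_{j}(\Lambda_{0,j}\otimes\cdots\otimes\Lambda_{n-1,j})^{\ast}(\tilde{\Lambda}_{0,j}\otimes\cdots\otimes\tilde{\Lambda}_{n-1,j})=K_{0}\otimes\cdots\otimes K_{n-1}$. This amounts to observing that a tensor product of $\ast$-K-g-Bessel sequences is a $\ast$-$(K\otimes L)$-g-Bessel sequence on the tensored module, which is implicit in the computation used in the proof of the preceding theorem together with the standard fact $(\Lambda\otimes\Gamma)^{\ast}=\Lambda^{\ast}\otimes\Gamma^{\ast}$ for adjointable operators on Hilbert $\mathcal{A}$-modules. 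Once this observation is in place, the induction closes and the corollary follows.
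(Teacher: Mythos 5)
Your proposal is correct and takes essentially the same route as the paper, whose entire proof is the one-liner ``Obvious by last theorem'': iterating the preceding tensor-product theorem is exactly what that remark means, and your induction on $n$ merely makes the iteration explicit. Your additional care in checking that the inductive hypothesis delivers a genuine dual pair (Bessel property included), rather than only the reproducing identity, is a refinement the paper silently omits, but it does not change the method.
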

\begin{proof}
	Obvious by last theorem.
\end{proof}

\bibliographystyle{amsplain}

\end{document}